\theoremstyle{plain}
\newtheorem{theorem}{Theorem}
\newtheorem{lemma}{Lemma}
\newtheorem*{corollary*}{Наслідок}
\newtheorem{proposition}{Proposition}
\theoremstyle{definition}
\theoremstyle{remark}
\newtheorem{remark}{Remark}
\newtheorem*{remark*}{Remark}
\begin{document}

\title[On the Hausdorff dimension faithfulness and the Cantor series expansion]
{On the Hausdorff dimension faithfulness  \\and the Cantor series expansion}
\author[S. Albeverio, G.Ivanenko,  M.Lebid, G. Torbin  ]{Sergio Albeverio$^{1,2,3,4,5}$,
Ganna Ivanenko$^{6}$,\\ Mykola Lebid$^{7,8}$, Grygoriy Torbin$^{9,10}$}
\date{}
\maketitle
\begin{abstract}
 We study  families $\Phi$ of coverings which are faithful for the Hausdorff dimension calculation on a given set $E$  (i. e., special relatively narrow families of coverings leading to the classical Hausdorff dimension of an arbitrary subset of $E$)  and which  are natural generalizations of  comparable net-coverings. They are shown to be very useful for the determination or estimation of the Hausdorff dimension of sets and probability measures.
 We give general necessary and sufficient conditions  for a covering family to be faithful and new techniques for proving faithfulness/non-faithfulness for the family of cylinders generated by expansions of real numbers. Motivated by applications in the multifractal analysis of infinite Bernoulli convolutions, we study in details the Cantor series expansion and prove necessary and sufficient conditions for the corresponding net-coverings to be faithful. To the best of our knowledge this  is the first known sharp   condition of the faithfulness for a class of covering families containing both faithful and non-faithful ones.

  Applying our results, we characterize fine fractal properties of probability measures with independent digits of the Cantor series expansion and show that a class  of faithful net-coverings essentially wider that the class of comparable ones.   We construct, in particular,  rather simple examples of faithful families $\mathcal{A}$ of net-coverings  which are "extremely non-comparable" to the Hausdorff measure.
\end{abstract}

$^1$~Institut f\"ur Angewandte Mathematik, Universit\"at Bonn,
Endenicher Allee 60, D-53115 Bonn (Germany); $^2$HCM and ~SFB 611, Bonn; $^3$ BiBoS,
Bielefeld--Bonn; $^4$~CERFIM, Locarno; $^5$~IZKS, Bonn; E-mail:
albeverio@uni-bonn.de

$^{6}$~National Dragomanov Pedagogical University, Pyrogova str. 9, 01030 Kyiv
(Ukraine); E-mail: anna.ivanenko@ukr.net

$^7$ Fakult\"{a}t  f\"{u}r Mathematik, Universit\"{a}t Bielefeld, Postfach 10 01 31, D-33501,  Bielefeld (Germany); $^{8}$~National Dragomanov Pedagogical University, Pyrogova str. 9, 01030 Kyiv
(Ukraine); E-mail: mlebid@math.uni-bielefeld.de

$^{9}$~ National Dragomanov Pedagogical University, Pyrogova str. 9, 01030 Kyiv
(Ukraine)~$^{10}$Institute for Mathematics of NASU, Tereshchenkivs'ka str. 3, 01601 Kyiv (Ukraine); E-mail:
torbin@iam.uni-bonn.de (corresponding author)
\medskip

\textbf{AMS Subject Classifications (2010): 11K55, 28A80,
60G30.}\medskip

\textbf{Key words:} fractals, Hausdorff dimension, faithful and non-faithful covering families,  Cantor series expansion,
 comparable net measures, infinite Bernoulli convolutions,    singular probability measures.

\section{Introduction}

The notion of the Hausdorff dimension  is well-known now  and is of great importance  in mathematics as well as in
diverse applied problems (see, e.g., \cite{Fal, Mandelbrot83, Pesin97, Triebel}).
In many situations the determination (or even estimations) of this dimension for sets from
a given family or even for a given set is a rather non-trivial problem (see, e. g. , \cite{Baranski, Bil1, Fal} and references therein).
Different approaches and special methods for the determination of the Hausdorff dimension are collected in \cite{Fal, Fal97, Mattila}.
A new approach based on the theory of transformations preserving the Hausdorff dimension (DP-transformations)  was
presented in \cite{APT, APT2}. In this paper  we develop another approach which is deeply connected with the theory of
DP-transformations as well as with the following well known approach:
to simplify the calculation of the Hausdorff dimension of a given set it is extremely useful to have an appropriate and a relatively narrow family of admissible coverings which lead to the same value of the dimension. We shall start to deal with one-dimensional sets and show later how our results can be naturally extended to the multidiminsional case and to the general case of metric spaces.

Without loss of generality we shall consider subsets from the unit interval. Let  $ \Phi$ be a \textit{fine family of coverings} on $[0,1]$, i.e., a family  of subsets of $[0,1]$ such that for any  $\varepsilon >0$ there exists an at most countable $\varepsilon$ - covering $\{E_{j}\}$
of $[0,1]$ with $E_j \in \Phi$.
Let us shortly recall that \textit{the $\alpha $-dimensional Hausdorff measure}
of a set $E\subset [0,1]$ w. r. t.  a given  fine family of coverings $ \Phi$  is defined by
\[H^{\alpha } (E,  \Phi)=\mathop{\lim }\limits_{\varepsilon \to \infty } \left[\mathop{\mathop{\inf }\limits_{\left|E_{j} \right|\le \varepsilon } \left\{\sum _{j}\left|E_{j} \right|^{\alpha }  \right\}}\limits_{} \right]=\mathop{\lim }\limits_{\varepsilon \to \infty } H_{\varepsilon }^{\alpha } (E,\, \Phi ),\]
where the infimum is taken over all at most countable $\varepsilon $-coverings $\{ E_{j} \} $ of $E$, $E_{j} \in \Phi$.

We remark that, generally speaking, $H^{\alpha }(E,  \Phi)$ depends on the family $\Phi$. The family of all subsets of $[0, 1]$ and the family of all closed (open) subintervals of $[0, 1]$  give rise
to the same $\alpha$-dimensional Hausdorff measure, which will be denoted by
$H^{\alpha } (E)$.
 The nonnegative number
\[\dim _{H} (E,\, \Phi)=\inf \{ \alpha :\, \, H^{\alpha } (E,\, \Phi )=0\}\]
\noindent is called the Hausdorff dimension of the set $E\subset [0,1]$ w. r. t. a family $\Phi$.
If $\Phi$ is the family of all subsets of $[0, 1]$, or $\Phi$ coincides with the family of
all closed (open) subintervals of [0,1], then $\dim _{H} (E,\, \Phi)$ equals to the classical Hausdorff dimension $\dim _{H} (E)$ of the subset $E \subset [0,1]$.

The  notion of comparable net measures are also well known now (see, e. g. , \cite{Falconer85, R}). Roughly speaking, net measures are special cases of $H^{\alpha }(E,  \Phi)$, where the family $\Phi$ consists of sets with the following properties: 1) if $A_1$ and $A_2$ belong to $\Phi$, then $A_1 \subset A_2$ or  $A_2 \subset A_1$ or $A_1 \bigcap A_2 = \emptyset$; 2) $\Phi$ is countable; 3) at most a finite number of sets from $\Phi$ contain any given set from $\Phi$. Then the corresponding net measure $H^{\alpha } (E,  \Phi)$ is said to be comparable to Hausdorff measure if the ratios of measures are bounded above and below.  It has been shown that comparable net measures are very useful in the study of Hausdorff measures (see, e.g., \cite{Besicovitch, Falconer85, Marstrand1954, R} and references therein).

In this paper we actually develop theory of measures  which are generalizations of comparable net measures in the following sense.

\noindent \textbf{Definition}. A fine covering family $\Phi$ is said to be \textit{faithful family of coverings}~\textit{(non-faithful family of coverings)} for the Hausdorff dimension calculation on $[0,1]$ if $$\dim _{H} (E,\Phi)=\dim _{H} (E), ~~~\forall E\subseteq [0,1]$$ $$(\mbox{resp.} ~~\exists E\subseteq [0,1]: \dim _{H} (E,\Phi)\neq\dim _{H} (E)).$$

It is clear that any family $\Phi$ of  comparable net-coverings (i.e., net-coverings which generate comparable net-measures)  is faithful.
Conditions for a fine covering family to be faithful were studied by many authors (see, e.g., \cite{AT2, B, Cutler, PT_NZ2003} and references therein). First steps in this direction have been done by A. Besicovitch (\cite{Besicovitch}), who proved the faithfulness for the family of cylinders of binary expansion. His result was extended by P. Billingsley (\cite{B}) to the family of $s$-adic cylinders, by M. Pratsiovytyi (\cite{TuP}) to the family of $Q$-$S$-cylinders, and by S. Albeverio and G. Torbin (\cite{AT2}) to  the family of $Q^*$-cylinders for those matrices $Q^*$ whose elements $p_{0k}, p_{(s-1)k}$ are bounded from zero.  Some general sufficient conditions for the  faithfulness of a given  family of  coverings  are also  known (\cite{Cutler}, \cite{PT_NZ2003}). Let us mentioned here that all these results were obtained by using the standard approach: if for a given family $\Phi$ there exist  positive constants $\beta \in \mathbb{R}$ and $N^{*}\in \mathbb{N}$ such that
for any interval $B=(a,b)$ there exist at most $N^{*}$ sets $B_j \in \Phi$ which cover $(a,b)$ and $|B_j|\leq \beta\cdot |B|$, then the family $\Phi$ is faithful. It is clear that all above mentioned families of net-coverings are even comparable.

  It is rather paradoxical that  initial  examples of non-faithful families of coverings appeared firstly  in two-dimensional case (as a result of active studies of self-affine sets during the last decade of XX century (see, e.g., \cite{BernardiBondioli})). The family of cylinders of the classical continued fraction expansion can probably be considered as the first (and rather unexpected) example of non-faithful one-dimensional net-family of coverings (\cite{PerTor}). By using approach, which has been invented by Yuval Peres to prove non-faithfulness of the family of continued fraction cylinders (\cite{PerTor}), in  \cite{AlbKonNikTor} authors have proven the non-faithfulness for the family of cylinders of  $Q_\infty$-expansion with polynomially decreasing elements $\{q_i\}$. The latter two  families of  coverings give  examples of non-comparable net measures. So, it is natural to ask about the existence of faithful covering families which are not comparable.

  We study this problem and give general  necessary and sufficient conditions for a fine covering system to be faithful. The main aim of the paper is to study faithful properties  of the covering families which are generated by the famous Cantor series expansions.
   Let us recall that for a given sequence $\left\{n_{k} \right\}_{k=1}^{\infty } $ with $n_{k} \in \mathbb{N} \backslash \{ 1\} ,\, k \in
\mathbb{N}$ the  expression of $x \in [0,1]$ in the following form
\[x=\sum_{k=1}^{\infty}\frac{\alpha_{k}}{n_{1} \cdot n_{2} \cdot \ldots \cdot n_{k}}=:\Delta_{\alpha_{1}\alpha_{2}...\alpha_{k}...},~\alpha_{k} \in \{0,~1,~...,~n_{k}-1\}\]
is said to be the Cantor series expansion of $x$.  These expansions, which have been initially studied by G. Cantor in 1869 (see., e.g. \cite{Cantor1869}), are natural generalizations of the classical $s$-adic expansion for reals. Cantor series expansions have been intensively studied from different points of view during last century (see, e.g., \cite{ManceDiss, Schweiger} and references therein). Our own motivations to study faithful properties of such expansions came from our investigations on fine fractal properties of infinite Bernoulli convolutions, i.e., probability distributions of the following random variables
\begin{equation} \label{1}
\xi  =\sum _{k=1}^{\infty }\xi_{k} a_{k} ,
\end{equation}
 where $\sum\limits _{k=1}^{\infty }a_{k}$ is a convergent positive series, and $\xi_{k} $ are independent random variables taking values  0 and 1 with probabilities $p_{0k} $ and $p_{1k} $ respectively. Measures of this form have been studied since
1930's from the pure probabilistic point of view as well as for
their applications in harmonic analysis, dynamical systems and
fractal analysis \cite{Peres}.  The Lebesgue structure  and fine fractal properties of the distribution of $\xi$ are well studied for the case where $r_k:=\sum\limits_{i=k+1}^{\infty}a_i \geq a_k$ for all large enough $k$ (see, e.g., \cite{AlbeverioTorbin2007,Cooper}).  The case where $a_k< r_k$ holds for an infinite number of $k$ can be considered  as a <<Terra incognita>> in this field. Even for the case where  $a_k=\lambda^k$ and $p_{0k}=\frac 12$ the  problem of singularity is still open (\cite{Solomyak95}). Main problems here are related to the fact that almost all points from the spectrum of $\xi$ have uncountably many different expansion in the form $\sum \varepsilon_k a_k, \varepsilon_k \in \{0,1\}.$ This is the so-called <<Bernoulli convolutions with large overlaps>>.  We consider two special classes of such measures.
 The first one is generated by sequences $a_k$ with the following properties:  $ \exists \left\{m_k\right\}$ such that $r_{m_k} > a_{m_k} = a_{m_k+1}+...+ a_{m_k+s_k},$ $s_k= m_{k+1}-m_k-1 \ge 2, ~~ k \in  N, $ $r_j=a_j,$ for $j \notin \{ m_k \}.$

 The second one is connected to the sequences $a_k$ such that  $ \forall k\in N$  $~ \exists s_{k} \in N\bigcup\{0\}: $ $~~a_{k}  = a_{k+1}  = ... = a_{k+s_{k} }  =  r_{k+s_{k} }$, and  $s_k>0$  for an infinite number of indices $k$.
 In both cases singularity plays a generic role and to study fine fractal properties of the corresponding probability distributions it is necessary to have knowledge on faithfulness (non-faithfulness) of fine families of partitions which are turned to be  the  Cantor series partitions.

 Main result of the present paper  states that  the family $\mathcal{A}$ of Cantor coverings of the unit interval is faithful for the Hausdorff dimension calculation  if and only if
\begin{equation}
\mathop{\lim }\limits_{k\to \infty } \frac{\ln n_{k} }{\ln n_{1} \cdot n_{2} \cdot \ldots \cdot n_{k-1} } =0.
\end{equation}

To the best of our knowledge this theorem gives the first necessary and sufficient  condition of the faithfulness for a class of covering families containing both faithful and non-faithful ones.
The proof of this result is given in the next Section. As a corollary of our results we characterize fine fractal properties of probability measures with independent digits of the Cantor series expansion and show that a class  of faithful net-coverings essentially wider that the class of comparable net-coverings.   We construct, in particular, simple examples of faithful families $\mathcal{A}$ of net-coverings  which are "extremely" non-comparable to the Hausdorff measure.

\section{Sharp conditions for the Hausdorff dimension faithfulness of\\ the Cantor series expansion}
In this Section we give some general conditions for a fine covering family to be faithful for the Hausdorff dimension calculation  and prove necessary and sufficient conditions for the Cantor series net-coverings to be faithful.


We start firstly with a very useful lemma, which can be proven easily, and, nevertheless,  presents general necessary and sufficient conditions for the faithfulness.

\begin{lemma}\label{lem_dim} Let $\Phi $ be a fine covering family on $[0,1]$. Then  $\Phi$ is faithful on the unit interval if and only if there exists a positive constant $C$ such that for any $ E\subset [0,1]$, any $\alpha \in (0,1]$ and any  $\delta \in (0,\alpha )$ the following inequality holds:
\begin{equation} \label{Lemma_1*}
 H^{\alpha } (E,\Phi )\le C \cdot  H^{\alpha -\delta } (E).
\end{equation}
\end{lemma}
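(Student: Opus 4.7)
My plan is to prove both implications directly from the definitions, since the lemma is essentially a reformulation of faithfulness in terms of a uniform quantitative bound.

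For the sufficiency direction ($\Leftarrow$), I would first record the trivial one-sided inequality: because admissible coverings by elements of $\Phi$ form a subclass of all countable coverings, the infimum defining $H^{\alpha}(E,\Phi)$ is at least the one defining $H^{\alpha}(E)$. Hence $H^{\alpha}(E)\le H^{\alpha}(E,\Phi)$ and $\dim_{H}(E)\le\dim_{H}(E,\Phi)$ always. For the reverse inequality, I would fix any $\alpha>\dim_{H}(E)$ and pick $\delta\in(0,\alpha)$ small enough that $\alpha-\delta>\dim_{H}(E)$ still holds; by definition of $\dim_{H}(E)$ this forces $H^{\alpha-\delta}(E)=0$, and then (\ref{Lemma_1*}) yields $H^{\alpha}(E,\Phi)\le C\cdot 0=0$, whence $\dim_{H}(E,\Phi)\le\alpha$. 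Letting $\alpha\downarrow\dim_{H}(E)$ completes this direction.

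For the necessity direction ($\Rightarrow$), I expect that $C=1$ already works. Fix arbitrary $E\subset[0,1]$, $\alpha\in(0,1]$, and $\delta\in(0,\alpha)$. If $H^{\alpha-\delta}(E)=+\infty$, the inequality is trivial. Otherwise $H^{\alpha-\delta}(E)<\infty$, which forces $\dim_{H}(E)\le\alpha-\delta$; using the assumed faithfulness $\dim_{H}(E,\Phi)=\dim_{H}(E)$ I then get $\dim_{H}(E,\Phi)\le\alpha-\delta<\alpha$, so $H^{\alpha}(E,\Phi)=0$ and (\ref{Lemma_1*}) holds with $C=1$.

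The only delicate point is the careful handling of the boundary cases $H^{\alpha-\delta}(E)\in\{0,+\infty\}$; once these are dispensed with, both directions reduce to the elementary equivalence ``$\dim_{H}(E,\Phi)\le\alpha$ for every $\alpha>\dim_{H}(E)$''. The strength of the lemma therefore lies not in depth but in utility: it converts the qualitative equality of two dimensions into a single uniform measure inequality, which is precisely the form one can hope to verify (or refute) for concrete covering families such as the Cantor series cylinders studied later in the paper.
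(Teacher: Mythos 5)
The paper never actually proves this lemma --- it is stated with the remark that it ``can be proven easily'' --- so there is no argument of the authors' to measure yours against; your proof is correct and is surely the intended one. The monotonicity $H^{\alpha }(E)\le H^{\alpha }(E,\Phi )$ (coverings from $\Phi $ form a subclass of all coverings) gives $\dim _{H}(E)\le \dim _{H}(E,\Phi )$ for free; inequality (\ref{Lemma_1*}) transfers the vanishing of $H^{\alpha -\delta }(E)$ to $H^{\alpha }(E,\Phi )$ and yields the reverse dimension inequality; and for necessity your dichotomy (either $H^{\alpha -\delta }(E)=+\infty $, or $\dim _{H}(E)\le \alpha -\delta $ and faithfulness forces $H^{\alpha }(E,\Phi )=0$) even shows the inequality holds with $C=1$, so the constant plays no real role. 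The one point worth flagging is the boundary case $\dim _{H}(E)=1$ in the sufficiency direction: since the hypothesis is restricted to $\alpha \in (0,1]$, there is no admissible $\alpha >\dim _{H}(E)$ to send downward, and the bound $\dim _{H}(E,\Phi )\le 1$ must be supplied separately (it is automatic for the interval-type net families the paper actually works with, but not for a completely arbitrary fine covering family as defined). This is a blemish of the lemma's formulation rather than of your argument, and it does not affect the use made of the lemma later in the paper.
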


Let us mention that this lemma can be obviously generalized to a multidimensional Euclidean space and even to any metric space, which can be equipped by fine covering families.

Based on the latter lemma one can easily get the following sufficient condition for the faithfulness of a fine covering family.

\begin{lemma}\label{lem_trust}

Let $\Phi$ be a fine covering family of $[0,1]$. Assume that there exists a positive
constant $C$ and a function $N(x):R_{+} \rightarrow \mathbb{N}$ such that:

\noindent 1) for any interval $I \subset[0,1]$ there exist at most  $N(\left|I\right|)$ subsets \[\triangle^{I}_{1},~
\triangle^{I}_{2},~...,~\triangle^{I}_{l(I)} \in \Phi\]  with \[l(I)\leq N(\left|I\right|),~ |\triangle^{I}_{j}|\leq\left|I\right|~ \textrm{and} ~ I
\subset\bigcup^{l(I)}_{j=1}\triangle^{I}_{j};\]

\noindent 2) for any $\delta >0$ there exists $\varepsilon_{1}(\delta) >0$ such that
\[N(\left|I\right|)\cdot
\left|I\right|^{\delta } \le C,~\forall I \subset [0,1]~ \text{with}~ |I|<\varepsilon_{1}(\delta).\]

\noindent Then the family $\Phi $ is faithful on $[0,1]$. 

\end{lemma}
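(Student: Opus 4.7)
The plan is to deduce this lemma directly from Lemma \ref{lem_dim}: it suffices to establish the inequality (\ref{Lemma_1*}) for every $E \subset [0,1]$, every $\alpha \in (0,1]$ and every $\delta \in (0,\alpha)$, with the very constant $C$ supplied by condition 2). The strategy is to take an arbitrary $\varepsilon$-covering of $E$ by subintervals (the class realising the classical Hausdorff premeasure $H^{\alpha-\delta}_\varepsilon(E)$), replace each subinterval by the small $\Phi$-covering guaranteed by condition 1), and estimate the resulting $\alpha$-sum using condition 2).

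Fix $E$, $\alpha$, $\delta$ as above, and set $\varepsilon_1 := \varepsilon_1(\delta)$. For an arbitrary $\varepsilon \in (0,\varepsilon_1)$ I would start from any at-most-countable $\varepsilon$-covering $\{I_j\}$ of $E$ by subintervals of $[0,1]$. Condition 1) furnishes, for each $I_j$, sets $\triangle^{I_j}_1,\dots,\triangle^{I_j}_{l(I_j)} \in \Phi$ satisfying $l(I_j) \le N(|I_j|)$, $|\triangle^{I_j}_k| \le |I_j|$ and $I_j \subset \bigcup_k \triangle^{I_j}_k$. Concatenating these families over all $j$ produces an at-most-countable $\varepsilon$-covering of $E$ by elements of $\Phi$, so that invoking condition 2) (legitimate because $|I_j|\le\varepsilon<\varepsilon_1$) gives
$$\sum_j \sum_{k=1}^{l(I_j)} |\triangle^{I_j}_k|^\alpha \le \sum_j N(|I_j|)\cdot|I_j|^\alpha = \sum_j \bigl(N(|I_j|)\cdot|I_j|^\delta\bigr)\, |I_j|^{\alpha-\delta} \le C \sum_j |I_j|^{\alpha-\delta}.$$
The left-hand side majorises $H^{\alpha}_\varepsilon(E,\Phi)$, while taking the infimum on the right over all $\varepsilon$-coverings of $E$ by subintervals delivers $H^{\alpha}_\varepsilon(E,\Phi) \le C \cdot H^{\alpha-\delta}_\varepsilon(E)$. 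Letting $\varepsilon \to 0^+$ produces inequality (\ref{Lemma_1*}), and Lemma \ref{lem_dim} closes the argument.

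The proof is essentially routine once Lemma \ref{lem_dim} is in hand; the one point that demands care is the order of quantifiers. Since the threshold $\varepsilon_1$ supplied by condition 2) depends on $\delta$, one must first restrict to $\varepsilon<\varepsilon_1(\delta)$ so that the bound $N(|I_j|)\cdot|I_j|^\delta \le C$ applies simultaneously to every piece of the chosen $\varepsilon$-covering, and only afterwards pass to $\varepsilon\to 0$. No further combinatorial or geometric input beyond this bookkeeping should be required.
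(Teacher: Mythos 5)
Your proof is correct and follows exactly the route the paper intends: the paper gives no written proof of Lemma~\ref{lem_trust}, merely noting that it follows from Lemma~\ref{lem_dim}, and your argument (replace each interval of an $\varepsilon$-covering by its $\Phi$-covering from condition 1, bound the $\alpha$-sum via $N(|I_j|)\cdot|I_j|^{\delta}\le C$, pass to the infimum and then to $\varepsilon\to 0$) is the standard way to fill in that deduction. Your remark about first fixing $\delta$ and restricting to $\varepsilon<\varepsilon_1(\delta)$ before letting $\varepsilon\to 0$ is precisely the right quantifier bookkeeping.
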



Let $\left\{n_{k} \right\}_{k=1}^{\infty } $ be a sequence with $n_{k} \in \mathbb{N} \backslash \{ 1\} ,\, k \in
\mathbb{N}$. Let us recall that the  expression of $x$ in the following form
\[x=\sum_{k=1}^{\infty}\frac{\alpha_{k}}{n_{1} \cdot n_{2} \cdot \ldots \cdot n_{k}}=:\Delta_{\alpha_{1}\alpha_{2}...\alpha_{k}...},~\alpha_{k} \in \{0,~1,~...,~n_{k}-1\}\]
is said to be the Cantor series expansion of a real number $x \in [0, 1]$.


For a given sequence $\left\{n_{k} \right\}_{k=1}^{\infty }$ let $\mathcal{A}_{k}$ be the family of the k-th rank intervals (cylinders) , i.e.,
\[\mathcal{A}_{k}:=\{E:E=\Delta_{\alpha_{1}\alpha_{2}...\alpha_{k}}, ~\alpha_{i}\in\overline{1,n_{i}}, ~i=1,~2,~...,~k\}.\]
Let $\mathcal{A}$ be the family of all possible rank intervals, i.e.,
\[\mathcal{A}:=\{E:E=\Delta_{\alpha_{1}\alpha_{2}...\alpha_{k}}, ~n\in \mathbb{N}, ~\alpha_{i}\in\overline{1,n_{i}}, ~i=1,~2,~...,~n\}, \] which is said to be the \textit{Cantor covering family}.




The following theorem gives necessary and sufficient conditions for a Cantor covering family to be faithful.

\begin{theorem} The family $\mathcal{A}$ of Cantor coverings of the unit interval is faithful for the Hausdorff dimension calculation if and only if

\begin{equation} \label{main_equ}
\mathop{\lim }\limits_{k\to \infty } \frac{\ln n_{k} }{\ln n_{1} \cdot n_{2} \cdot \ldots \cdot n_{k-1} } =0.
\end{equation}

\end{theorem}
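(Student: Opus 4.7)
The plan is to derive sufficiency directly from Lemma~\ref{lem_trust} and to establish necessity by constructing a Cantor-type set on which the ``reshaping'' of many consecutive surviving cylinders into one short interval makes an arbitrary-interval cover strictly cheaper than any cylinder cover, separating the two Hausdorff dimensions. For sufficiency, assuming (\ref{main_equ}), for each $I\subset[0,1]$ let $k=k(I)$ be the unique rank with $(n_1\cdots n_k)^{-1}\le|I|<(n_1\cdots n_{k-1})^{-1}$; then $I$ is covered by at most $n_k+1$ rank-$k$ cylinders, each of length at most $|I|$, giving
\[
N(|I|)\cdot|I|^{\delta}\le(n_k+1)(n_1\cdots n_{k-1})^{-\delta}.
\]
Hypothesis (\ref{main_equ}) makes $n_k\le(n_1\cdots n_{k-1})^{\delta}$ for all sufficiently large $k$, so the right-hand side is bounded as $|I|\to 0$, and Lemma~\ref{lem_trust} yields faithfulness.

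For necessity, suppose (\ref{main_equ}) fails; fix $\beta_0>0$ and a subsequence $(k_j)$ with $\ln n_{k_j}\ge\beta_0\ln(n_1\cdots n_{k_j-1})$. Choose $\gamma\in(0,1)$, set $M_j:=\lfloor n_{k_j}^{1-\gamma}\rfloor$, and define the Cantor-type set
\[
E:=\{x\in[0,1]:\alpha_{k_j}(x)\in\{0,\dots,M_j\}\text{ for every }j\},
\]
leaving all other digits free. Inside each rank-$(k_j-1)$ cylinder meeting $E$, the $M_j+1$ surviving rank-$k_j$ subcylinders are consecutive and form one interval of length $\asymp(n_1\cdots n_{k_j-1})^{-1}n_{k_j}^{-\gamma}$. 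Covering $E$ at stage $j$ by these reshaped blocks and evaluating the $\alpha$-sum in logarithmic form gives $\dim_H(E)\le\liminf_j\alpha_c^{(j)}$ with
\[
\alpha_c^{(j)}:=\frac{s_j-\gamma t_j}{s_j+\gamma u_j},\quad s_j:=\ln(n_1\cdots n_{k_j-1}),\ u_j:=\ln n_{k_j},\ t_j:=\sum_{l<j}\ln n_{k_l}.
\]

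For the matching lower bound on $\dim_H(E,\mathcal{A})$ place on $E$ the product probability measure $\mu$ assigning weight $1/(M_l+1)$ to each admitted digit at rank $k_l$ and $1/n_i$ to each digit at every other rank, then apply the mass distribution principle to the family $\mathcal{A}$. A direct logarithmic estimate of $\mu(\Delta)/|\Delta|^{\alpha}$ over $\Delta\in\mathcal{A}$ shows that the tightest constraint occurs at ranks $m=k_j$, giving $\dim_H(E,\mathcal{A})\ge\liminf_j\alpha_{\rm mass}^{(j)}$ with $\alpha_{\rm mass}^{(j)}:=1-\gamma(t_j+u_j)/(s_j+u_j)$. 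The main obstacle, and the heart of the argument, is the algebraic identity
\[
\alpha_{\rm mass}^{(j)}-\alpha_c^{(j)}=\frac{\gamma(1-\gamma)u_j(t_j+u_j)}{(s_j+u_j)(s_j+\gamma u_j)}\ge\gamma(1-\gamma)\Bigl(\frac{u_j}{s_j+u_j}\Bigr)^{2}\ge\frac{\gamma(1-\gamma)\beta_0^{2}}{(1+\beta_0)^{2}}=:c_0>0,
\]
where the last step uses $u_j/(s_j+u_j)\ge\beta_0/(1+\beta_0)$ coming from the subsequence condition. Taking $\liminf$ in $\alpha_c^{(j)}\le\alpha_{\rm mass}^{(j)}-c_0$ then yields $\dim_H(E)\le\dim_H(E,\mathcal{A})-c_0<\dim_H(E,\mathcal{A})$, showing $\mathcal{A}$ is not faithful.
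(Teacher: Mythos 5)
Your argument is correct. The sufficiency half is essentially the paper's: the same choice of rank $k(I)$, a cover of $I$ by $O(n_{k(I)})$ cylinders of rank $k(I)$, and an appeal to Lemma~\ref{lem_trust}. The necessity half rests on the same underlying construction as the paper's (restrict the digits at a sparse subsequence of ranks $k_j$ to an initial segment of about $n_{k_j}^{1-\gamma}$ values, cover by merged consecutive blocks to bound $\dim_H$ from above, and use a product measure with the mass distribution principle over cylinders to bound $\dim_H(\cdot,\mathcal{A})$ from below; the paper takes $\gamma=\tfrac12$), but you quantify the dimension gap differently, and your version is more robust in two respects. The paper assumes $\overline{\lim}_k \ln n_k/\ln(n_1\cdots n_{k-1})=C$ with $0<C<\infty$, pins the subsequence by the two-sided inequality $(n_1\cdots n_{k_i-1})^{C-\delta}\le n_{k_i}\le(n_1\cdots n_{k_i-1})^{C+\delta}$ together with an auxiliary density condition on $\{k'_j\}$, and produces the two explicit numbers $2/(2+C)$ and $(4+C)/(4+3C)$; the upper half of that pinching is unavailable when the $\limsup$ is infinite, so that case would need separate treatment. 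You use only the one-sided bound $u_j\ge\beta_0 s_j$, compute the exact per-stage gap $\alpha_{\mathrm{mass}}^{(j)}-\alpha_c^{(j)}=\gamma(1-\gamma)u_j(t_j+u_j)/\big((s_j+u_j)(s_j+\gamma u_j)\big)$ (the identity checks out: $1-\frac{s-\gamma t}{s+\gamma u}=\frac{\gamma(t+u)}{s+\gamma u}$ and the difference of the two fractions reduces as you claim), and bound it below by a uniform constant $c_0>0$; this handles $\limsup=+\infty$ without a separate case, dispenses with the density condition, and makes the separation of the two $\liminf$'s immediate. Your observation that the mass ratio is minimized at $m=k_j$ within each block $[k_j,k_{j+1})$ is also right, since the deficit $\sum_{l:k_l\le m}\big(u_l-\ln(M_l+1)\big)$ is constant there while $\ln(n_1\cdots n_m)$ grows. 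The one point you should make explicit in a final write-up is the effect of the floor in $M_j=\lfloor n_{k_j}^{1-\gamma}\rfloor$ hidden behind your $\asymp$: each $\ln(M_l+1)$ carries an additive $O(1)$ error and these sum to $O(j)$, which is a priori comparable to $t_j$; the errors are in fact harmless because $u_l\ge\beta_0 s_l\to\infty$ along your subsequence, so they are $o(t_j+u_j)$ and the gap survives with a marginally smaller constant, but this deserves a sentence rather than a symbol.
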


\begin{proof} \textbf{Sufficiency}. Let (\ref{main_equ}) holds. It is enough to prove that \[\dim_{H}(E) \geq \dim_{H}(E,\Phi),~ \forall E \subset [0,1].\]

Let $I$ be an arbitrary interval. Then there exists an interval $\Delta(k(I)) =\Delta_{\alpha _{1} ...\alpha _{k(I)} } \in \mathcal{A} $ such that:

1) $\Delta _{\alpha _{1} ...\alpha _{k(I)} } \subset I$;

2) any interval of $(k(I)-1)$ th rank does not belong to $I$.

\noindent The  interval $I$ contains at most $2\cdot n_{k(I)} $  intervals from $\mathcal{A}_{k}$. So $I$ can be covered by $N(\left|I\right|)=2\cdot n_{k(I)} +2$ intervals from $\mathcal{A}_{k}$. Therefore,

\[\left|\Delta(k(I)) \right|\le \left|I\right|< N(\left|I\right|)\cdot \left|\Delta(k(I)) \right| .\]

Let $C$ be an arbitrary positive constant. Then the equality
\[\mathop{\lim }\limits_{k\to \infty } \frac{\ln n_{k} }{\sum _{i=1}^{k-1}\ln n_{i}  } =0\]
holds if and only if  for any positive $\delta$ there exists $k_{0} (\delta)\in \mathbb{N}$ such that $\forall k>k_{0} (\delta): $
\[(2\cdot n_{k} +2) \cdot \left(\frac{2\cdot n_{k} +2}{n_{1} \cdot n_{2} \cdot ...\cdot n_{k-1} \cdot n_{k} } \right)^{\delta } \le C .\]

Therefore $\forall \delta>0,~\exists k_{0} (\delta ),~\forall k>k_{0} (\delta ):$
\[N(\left|I \right|)\cdot \left|I \right|^{\delta }  \le C.\]
So, from Lemma \ref{lem_trust} it follows that $\mathcal{A}$  is faithful for the Hausdorff dimension calculation.

\textbf{Necessity}. Now we show that if
\begin{equation} \label{GrindEQ__1_}
\mathop{\overline{\lim }}\limits_{k\to \infty } \frac{\ln n_{k} }{\ln n_{1} \cdot n_{2} \cdot \ldots \cdot n_{k-1} } =:C>0,
\end{equation}
\noindent then $\mathcal{A}$  is non-faithful for the Hausdorff dimension calculation. To this end we shall construct a set $T=T(C)$ with the following properties:

\[1) \dim _{H} (T)\le \frac{2}{2+C} ;\]

\[2) \dim _{H} (T,\mathcal{A} )\ge \frac{4+C}{4+3C} .\]

\noindent From (\ref{GrindEQ__1_}) it follows that there exists a subsequence $\{ k_{i} \} $ such that $\forall \delta \in (0,C),~ \exists N_{0} (\delta ),$ $\forall k_{i}>N_{0} (\delta)$:

\begin{equation} \label{GrindEQ__2_}
(n_{1} n_{2} \ldots n_{k_{i} -1} )^{C-\delta } \le n_{k_{i} } \le (n_{1} n_{2} \ldots n_{k_{i} -1} )^{C+\delta } .
\end{equation}

\noindent It is clear that $\forall \varepsilon >0,\, \, \exists \, N_{1} (\varepsilon )\, :\, \forall k>N_{1} (\varepsilon ):$
\begin{equation} \label{GrindEQ__3_}
\frac{1}{n_{1} \cdot n_{2} \cdot \ldots \cdot n_{k-1} } <\varepsilon .
\end{equation}
Let  $N_{2} (\varepsilon ,\delta ):=\max \{ N_{0} (\delta ),~N_{1} (\varepsilon )\, \} .$ Let us choose a subsequence $\left\{k'_{j} \right\}$ from the sequence $\{ k_{i} \} $ with the following property:
\begin{equation} \label{intrest _inequality}
 \frac{\ln(n_{k'_{j-1} +1} \ldots n_{k'_{j} -1})}{\ln (n_{1} n_{2} ...n_{k'_{j-1} -1} n_{k'_{j-1} } n_{k'_{j-1} +1} \ldots n_{k'_{j} -1})}>1-\frac{C}{4},
\end{equation}
\noindent and construct the set $T$ in the following way:

$$
T=
\bigg\{x: x\in[0,1),~ x=\sum _{k=1}^{\infty }\begin{array}{l} {\frac{\alpha _{k}(x) }{\prod _{i=1}^{k}n_{i}  } ,}~\alpha _{k} (x)\in \overline{0,\left[\sqrt{n_{k} } \right]\, } \end{array}
$$

$$
\textrm{if}~
\ k\in \{ k'_{j} \},~ \textrm{and}~ \alpha _{k}(x) \in \overline{0,n_{k} -1\, } ~ \textrm{if}~\, k\notin \{ k'_{j}\}\bigg\}.
$$

Firstly let us show that

\begin{equation} \label{first_inequality}
\dim _{H} (T)\le \frac{2}{2+C} .
\end{equation}

Let $k'_{j}>N_{2} (\varepsilon ,\delta )$. The set $T$ can be covered by $n_{1} \cdot n_{2} \cdot \ldots \cdot n_{k'_{j}-1} $ intervals and each of them is a union of $\left[\sqrt{n_{k'_{j}} } \right]+1$ sets from $\mathcal{A}_{k'_{j}}$. The $\alpha$-volume of this $\varepsilon$-covering is equal to

\[ n_{1} n_{2} \ldots n_{k'_{j}-1} \left(\frac{\left[\sqrt{n_{k'_{j}} } \right]+1}{n_{1} n_{2} \ldots n_{k'_{j}} } \right)^{\alpha }.\]

 From (\ref{GrindEQ__2_}) it follows that

\[\begin{array}{l} n_{1} n_{2} \ldots n_{k'_{j}-1} \left(\frac{\left[\sqrt{n_{k'_{j}} } \right]+1}{n_{1} n_{2} \ldots n_{k'_{j}} } \right)^{\alpha } \mathop{\le }2^{\alpha } (n_{1} n_{2} \ldots n_{k'_{j}-1} )^{1-\frac{1}{2} \alpha (C-\delta )-\alpha } , \end{array}\]

\noindent Suppose
\[1-\frac{1}{2} \alpha (C-\delta )-\alpha <0,\]

\noindent then
\[H_{\varepsilon }^{\alpha } (T)\le \mathop{\lim }\limits_{j\to \infty } 2^{\alpha } (n_{1} n_{2} ...n_{k'_{j}-1} )^{1-\frac{1}{2} \alpha (C-\delta )-\alpha } =0.\]
\noindent Therefore,

\[H_{\varepsilon }^{\alpha } (T)=0,~ \forall \alpha >\frac{2}{C-\delta +2} ,~ \forall \varepsilon >0,~ \forall \delta >0.\]
\noindent So,
\[\dim _{H} T\le \frac{2}{C-\delta +2},~\forall \delta >0,\]
and, hence,
\[\dim _{H} T\le \frac{2}{C +2}.\]

Now let us show that
\[\dim _{H} (T,\mathcal{A} )\ge \frac{4+C}{4+3C} .\]

Let
\[\{k''_{j}\}=\{k'_{j}\}\bigcap\{N_{2}(\varepsilon,\delta)+1,~N_{2}(\varepsilon,\delta)+2,~...\}.\]

\noindent Let $\mu=\mu_{N_{2}(\varepsilon,\delta)}$ be the probability measure corresponding to the random variable

\[\xi =\sum _{k=1}^{\infty }\frac{\xi _{k} }{\prod _{i=1}^{k}n_{i}  } ,\,  \]

\noindent where $\xi _{k} $ are independent random variables; if $k\in \{ k''_{j} \}$, then $\xi _{k} $ takes values $0,~1,~ ..., ~[\sqrt{n_{k} }]  $ with probabilities $\frac{1}{\left[\sqrt{n_{k} } \right]+1}$;
if $k\notin \{ k''_{j} \}$, then $\xi _{k} $ takes values $0,~1,~ ...,~ n_{k} -1 $ with probabilities $\frac{1}{n_{k}}$.

Then
\[\left|\Delta _{\alpha _{1} \alpha _{2} ...\alpha _{k} } \right|=\frac{1}{n_{1} n_{2} ...n_{k} } \]
for any $\Delta _{\alpha _{1} \alpha _{2} ...\alpha _{k} } $ from $\mathcal{A}_{k}$,
and
\[\mu (\Delta _{\alpha _{1} \alpha _{2} ...\alpha _{k} } )=\frac{1}{\varphi _{1} \varphi _{2} ...\varphi _{k} } \]
where  $\varphi _{t} =n_{t}$ if $t\notin \{ k''_{j} \} $ and $\varphi _{t} =\left[\sqrt{n_{t} } \right]+1$ if $t\in \{ k''_{j} \} $, $\forall t\in \mathbb{N}$.

Let us show that

\begin{equation} \label{GrindEQ__411_}
\frac{\ln (\mu (\Delta _{\alpha _{1} \alpha _{2} ...\alpha _{k} } ))}{\ln (\left|\Delta _{\alpha _{1} \alpha _{2} ...\alpha _{k} } \right|)} \ge \frac{4+C-2\delta }{4+3C+4\delta } ,\, \forall k\in \mathbb{N}.
\end{equation}

\noindent Taking into account  properties of $\{k''_{j}\}$, one can prove by induction on $j$ that
\begin{equation} \label{GrindEQ__42_}
\frac{\ln \left( \varphi _{1} \varphi _{2} ...\varphi _{k''_{j} } \right)}{\ln\left( n_{1} n_{2} n_{3} ...n_{k''_{j} }\right) } \ge \frac{4+C-2\delta }{4+3C+4\delta } ,\, \forall j\in \mathbb{N}.\,
\end{equation}

\noindent Let  $k \in(k''_{j},k''_{j+1})$. Then
\[\frac{\ln \left(\mu \left(\Delta _{\alpha _{1} \alpha _{2} ...\alpha _{k} } \right)\right)}{\ln \left(\left|\Delta _{\alpha _{1} \alpha _{2} ...\alpha _{k} } \right|\right)} \ge \frac{\ln \left(\mu \left(\Delta _{\alpha _{1} \alpha _{2} ...\alpha _{k''_{j} } } \right)\right)}{\ln \left(\left|\Delta _{\alpha _{1} \alpha _{2} ...\alpha _{k''_{j} } } \right|\right)} \ge \frac{4+C-2\delta }{4+3C+4\delta } ,\, \forall k\in \mathbb{N}. \]

Let $\{ \Delta' _{i} \}$ be an arbitrary  $\varepsilon $ - covering of Т,  $ \Delta' _{i}\in \mathcal{A},~\forall i \in \mathbb{N}$. Then, using (\ref{GrindEQ__42_}) we get
\noindent
\[\frac{4+C-2\delta }{4+3C+4\delta } \le \frac{\ln (\mu (\Delta' _{i} ))}{\ln (\left|\Delta' _{i} \right|)} <1\]

\noindent which implies that

\[\mu (\Delta' _{i} )\le \left|\Delta' _{i} \right|^{\frac{4+C-2\delta }{4+3C+4\delta } } .\]

Let
$ \alpha \in [0,\frac{4+C-2\delta }{4+3C+4\delta } ).$
Then we have
\[1=\mu (T)\le \bigcup _{i}\mu (\Delta' _{i} )\le \sum _{i}\left|\Delta' _{i} \right|^{\frac{4+C-2\delta }{4+3C+4\delta } } \le  \sum _{i}\left|\Delta' _{i} \right|^{\alpha }.\]
So,
$\forall \delta>0,~\forall \varepsilon>0,~ \alpha \in [0,\frac{4+C-2\delta }{4+3C+4\delta } )$, and for any $\varepsilon$ - covering  $\{ \Delta' _{i} \}$ of the set $T$ by cylinders $\Delta' _{i} \in \mathcal{A}$ we have  :
\[\sum _{i}\left|\Delta' _{i} \right|^{\alpha } \ge 1 . \]
Therefore,
\[H^{\alpha}_{\varepsilon}(T,\mathcal{A}) \geq 1,~~\forall \delta>0,~ \forall \varepsilon>0,~ \alpha \in [0,\frac{4+C-2\delta }{4+3C+4\delta } ).\]
So,
\[H^{\alpha } (T,\mathcal{A} )\ge 1, ~~~ \forall \delta >0,~ \forall \alpha <\frac{4+C-2\delta }{4+3C+4\delta }.\]
Hence,
\[\dim _{H} (T,\mathcal{A} )\ge \frac{4+C-2\delta }{4+3C+4\delta } ,\, \forall \delta >0,\]
and, therefore,
\[ \dim _{H} (T,\mathcal{A} )\ge \frac{4+C}{4+3C}, \]
which completes the proof.
\end{proof}

\section{Some applications}
First application of this theorem will be connected with fine fractal properties of random Cantor expansions.
Let us recall that for a given probability measure $\mu$ the number $$
  \dim_H \mu = \inf \{ \dim_H
  (E): ~ \mu(E)=1\}
$$
 is said to be the Hausdorff dimension of the measure $\mu$. In the case of singularity this number is a rather important  characteristic of a probability measure (see, e.g., \cite{AT2}).
Applying the latter theorem and methods from \cite{AT2}, we get the Hausdorff dimension of the probability distribution $\mu_{\xi}$ of the  random variable  $\xi$ with independent digits of  the Cantor series expansion, i.e.,  $$\xi=\sum_{k=1}^{\infty}\frac{\xi_{k}}{n_{1}n_{2}...n_{k}},$$ where independent $\xi_k$ take values $0,1,..., n_k-1$ with probabilities  $p_{0k}, p_{1k}, ... ,p_{n_{k}-1,k}$ respectively.

\begin{proposition}\label{H dim for rce}
 Let $h_j= - \sum\limits_{i=0}^{n_k-1} p_{ij} \ln p_{ij}$ and $
H_k = \sum\limits_{j=1}^k h_j.$
  If  \begin{equation}\label{condition for H dim}
      \sum\limits_{k=1}^{\infty}\left(\frac{\ln n_{k}}{\ln \left(n_{1}n_{2}...n_{k} \right)} \right)^2 <\infty,
    \end{equation}
     then the Hausdorff dimension of the probability distribution $\mu_{\xi}$ of the  random variable  $\xi$ with independent digits of  the Cantor series expansion  is equal to
 $$
\dim_H(\mu_{\xi})=\lim\limits_{\overline{ k \to
\infty}}\frac{H_k}{\ln \left( n_{1}n_{2}...n_{k} \right) }.$$
\end{proposition}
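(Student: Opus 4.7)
The plan is to combine the main Theorem of this section (sharp faithfulness of $\mathcal{A}$) with a Billingsley-type mass-distribution principle applied to $\mu_\xi$. First, the hypothesis $\sum_k(\ln n_k/\ln(n_1\cdots n_k))^2<\infty$ forces $\ln n_k/\ln(n_1\cdots n_k)\to 0$, which is algebraically equivalent to condition~(\ref{main_equ}) (write $a_k=\ln n_k$, $S_k=\sum_{j\leq k}a_j$ and note $a_k/S_{k-1}=(a_k/S_k)/(1-a_k/S_k)$). Hence the main Theorem applies, the Cantor covering family $\mathcal{A}$ is faithful, and $\dim_H E=\dim_H(E,\mathcal{A})$ for every Borel $E\subset[0,1]$; in particular, $\dim_H\mu_\xi$ may be computed using only $\mathcal{A}$-coverings.

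Next, put $b_k:=\ln(n_1n_2\cdots n_k)$ and $Y_j(x):=-\ln p_{\alpha_j(x),j}$, where $\alpha_j(x)$ denotes the $j$-th Cantor digit of $x$. Since $|\Delta_{\alpha_1\ldots\alpha_k}|=e^{-b_k}$ and $\mu_\xi(\Delta_{\alpha_1(x)\ldots\alpha_k(x)})=\prod_{j\leq k}p_{\alpha_j(x),j}$, the lower cylinder pointwise dimension is
\[
\underline{d}_{\mathcal{A}}(\mu_\xi,x)=\liminf_{k\to\infty}\frac{\sum_{j=1}^{k}Y_j(x)}{b_k}.
\]
Under $\mu_\xi$ the $Y_j$ are independent with $E[Y_j]=h_j$, so $E\bigl[\sum_{j\leq k}Y_j\bigr]=H_k$. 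I would then verify the Kolmogorov variance criterion $\sum_j \mathrm{Var}(Y_j)/b_j^2<\infty$ by combining (\ref{condition for H dim}) with a universal bound $\mathrm{Var}(Y_j)\leq C(\ln n_j)^2$; the latter follows from a Lagrange-multiplier analysis on the simplex showing that the extremizers of $\sum_i p_i(\ln p_i)^2$ take at most two distinct values and that the resulting maximum is of order $(\ln n)^2$. The Kolmogorov--Khinchin strong law for independent non-identically distributed summands then yields $(1/b_k)\sum_{j=1}^{k}(Y_j-h_j)\to 0$ $\mu_\xi$-a.s., whence
\[
\underline{d}_{\mathcal{A}}(\mu_\xi,x)=\liminf_{k\to\infty}\frac{H_k}{b_k}=:\alpha_{*}\qquad\mu_\xi\text{-almost surely}.
\]

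The final step is the Billingsley-type principle for faithful cylinder families from~\cite{AT2}: almost-sure constancy of the lower cylinder pointwise dimension forces $\dim_H\mu_\xi=\alpha_{*}$. The lower bound $\dim_H\mu_\xi\geq\alpha_{*}$ follows from a Frostman-type mass distribution argument applied to $\mathcal{A}$-coverings (legitimate by the first paragraph), and the upper bound $\dim_H\mu_\xi\leq\alpha_{*}$ follows from a Vitali-type selection of nested $\mathcal{A}$-cylinders along a subsequence realising the liminf. The main technical obstacle is the variance estimate $\mathrm{Var}(Y_j)=O((\ln n_j)^2)$: the crude pointwise bound $p(\ln p)^2\leq 4/e^2$ yields only $\mathrm{Var}(Y_j)=O(n_j)$, which would require the far stronger summability $\sum n_j/b_j^2<\infty$ and is not implied by~(\ref{condition for H dim}); the sharper simplex-extremal bound is exactly what makes the given hypothesis sufficient, and the reason why the result is phrased with the liminf rather than an ordinary limit.
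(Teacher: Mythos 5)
The paper gives no written proof of this proposition beyond the remark that it follows from the faithfulness theorem together with the methods of \cite{AT2}, and your argument is precisely that intended route: \eqref{condition for H dim} forces \eqref{main_equ} and hence faithfulness of $\mathcal{A}$, the Kolmogorov strong law identifies the lower cylinder local dimension with $\varliminf_{k}H_k/\ln(n_1\cdots n_k)$ for $\mu_\xi$-a.e.\ point, and the Billingsley mass-distribution principle for nested cylinders converts this into $\dim_H\mu_\xi$. Your proof is correct, and you rightly isolate the one delicate step, the uniform simplex bound $\sum_i p_i(\ln p_i)^2=O\bigl((\ln n)^2\bigr)$ (which can also be obtained by splitting the sum at $p_i=n^{-2}$), which is exactly what makes hypothesis \eqref{condition for H dim}, rather than a condition on the $n_k$ themselves, sufficient.
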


Now let us consider examples which show essential differences   between the notions  of \textit{faithful net-coverings} and \textit{comparable net-coverings}.

\textbf{Example 1.} Let $n_k=4^k,~\forall k \in \mathbb{N}$ and let $\mathcal{A}$ be  the net-covering family generated by the corresponding  Cantor series expansion. Let

$$
A=
\bigg\{x: x\in[0,1],~ x=\sum _{k=1}^{\infty }\begin{array}{l} {\frac{\alpha _{k}(x) }{\prod _{i=1}^{k} n_i } ,}~\alpha _{k} (x)\in \{0,1, ..., 2^k-1\},  ~~\forall k \in \mathbb{N} \end{array}\bigg\}.
$$
Then

$1) \dim_{H}A=\frac{1}{2};$

$2) H^{\frac{1}{2}}(A,\mathcal{A}) \geq 1;$

$3) H^{\frac{1}{2}}(A)=0.$

\begin{proof}

\noindent Let $\lambda$ be Lebesgue measure on the unit interval and let $\mu_{\xi}$ be the probability measure of the random variable

\[\xi =\sum _{k=1}^{\infty }\frac{\xi _{k} }{\prod _{i=1}^{k}n_{i}  } ,\,  \]

\noindent where $\xi _{k} $ are independent random variables taking values $0,~1,~ ..., ~2^k-1 $ with probabilities $\frac{1}{2^k}$. Let $\Delta_{n}(x)$ be the n-th rank cylinder of the Cantor series expansion containing $x$.  It is clear that for any $x \in A$ one has
$$\mu_{\xi}(\Delta_n(x))=2^{-\frac{n(n+1)}{2}}~~~ \mbox{and} ~~~ \lambda (\Delta_n(x))=4^{-\frac{n(n+1)}{2}}.$$
 So,
\begin{equation} \label{Bil_eq}
\frac{\ln \mu_{\xi}(\Delta_n(x))}{\ln \lambda (\Delta_n(x))}=\frac{1}{2}, ~~~ \forall x \in A.
\end{equation}

Using the Theorem 2.5 from  \cite{B}, we get $\dim_{H}(A,\mathcal{A})=\frac{1}{2}.$ From our theorem one can obviously derive the faithfulness of the family $\mathcal{A}$ for the case $n_k = 4^k$. Therefore $ \dim_{H}(A)=\frac{1}{2}$.

Let $\{ E _{j} \}$ be an arbitrary $\varepsilon $-covering of the set $A$ by cylinders from   $\mathcal{A}.$  Without loss of generality we may assume that $E_j \cap A \neq \emptyset$, i.e., $E_j = \Delta_{n_j}(x)$ for some $x \in A$.
Applying the mass distributional principle, we have
\[1=\mu (A)= \mu (\bigcup _{j} E _{j} )\le \sum _{j} \mu( E _{j}) = \sum _{j} |E_{j}|^{\frac{1}{2}}\]
for any $\varepsilon$-covering of  $A$ by cylinders from $\mathcal{A}$.
Therefore, $H^{\frac{1}{2}}(A,\mathcal{A}) \geq 1$.

The set $A$ can be covered by $2^1 \cdot 2^2 \cdot \ldots \cdot 2^{k-1} \cdot 1$ intervals (each of them is a union of $2^k$ k-th rank cylinders)  with length  $2^{-k^2}$. The $\frac{1}{2}$-volume of this covering is equal to
$2^{\frac{(k-1)k}{2}} \cdot \left( 2^{-k^2}\right)^{\frac{1}{2}},$  which tends to 0 as $k \rightarrow \infty$. Therefore,   $H^{\frac{1}{2}}(A)=0$.
\end{proof}

The following example shows that a faithful net-covering family can be "extremely non-comparable" to the Hausdorff measure.

\textbf{Example 2.} Let $n_k=4^k$ and let $\mathcal{A}$ be the net-covering family generated by the corresponding Cantor series expansion. Let
$$
T=
\bigg\{x: x\in[0,1],~ x=\sum _{k=1}^{\infty } \frac{\alpha _{k}(x) }{\prod _{i=1}^{k} 4^i } ~\text{with} ~\alpha _{k} (x)\in \overline{0, \sqrt{n_k}-1\, } ~\text{if}~ k \neq 2^s,
$$

$$
~\alpha _{k} (x)\in \overline{0,k \cdot \sqrt{n_k} -1\, } ~\text{if}~ k = 2^s , ~s \in \mathbb{N} \bigg\}.
$$

Then the family  $\mathcal{A}$ is faithful for the Hausdorff dimension calculation and

$1) \dim_{H}T=\frac{1}{2};$

$2) H^{\frac{1}{2}}(T,\mathcal{A}) = +\infty;$

$3) H^{\frac{1}{2}}(T)=0.$

\begin{proof}

\noindent Let $\mu_{\xi}$ be the probability measure with respect to the random variable

\[\xi =\sum _{k=1}^{\infty }\frac{\xi _{k} }{\prod _{i=1}^{k}n_{i}  } ,\,  \]

\noindent where $\xi _{k} $ are independent random variables with following distributions:

 if $k\neq2^s$, then $\xi _{k} $ takes values $0,~1,~ ..., ~2^k-1 $  with probabilities $\frac{1}{2^k}$;

if $k=2^s$, then $\xi _{k} $ takes values $0,~1,~ ...,~ k \cdot 2^k-1$ with probabilities $\frac{1}{k \cdot 2^k}$.

Let $\Delta_{n}(x)$ be the n-th rank cylinder of the Cantor series expansion containing $x$. From the construction of $\xi$ it follows that for any $x \in T$ one has  $$\mu_{\xi}(\Delta_n(x))=2^{-\left(\frac{n(n+1)}{2}+\frac{([\log_2 n ]+1)[\log_2 n]}{2}\right)}~~~ \mbox{and}~~~ \lambda (\Delta_n (x))=4^{-\frac{n(n+1)}{2}}.$$  So,
\begin{equation} \label{Bil_eq}
\lim\limits_{n \rightarrow \infty} \frac{\ln \mu_{\xi}(\Delta_n(x))}{\ln \lambda (\Delta_n(x))}=\frac{1}{2}, \forall x \in T.
\end{equation}

\noindent Using Theorem 2.5 from  \cite{B} and the faithfulness of $\mathcal{A} $ we get $$\dim_{H}(T,\mathcal{A})= \dim_{H}(T)= \frac{1}{2}.$$

For a given $m \in \mathbb{N}$ let us consider $2^m$  probability measures $\mu^j,~j=\overline{0,2^m-1}$  corresponding to the random variables
\[\xi^j =\sum _{k=1}^{\infty }\frac{\xi^j _{k} }{\prod _{i=1}^{k} 4^i  } ,\,  \]
whose independent digits $\xi^j _{k} $ have the following distributions:

 if $k \neq 2^s$, then $\xi^j _{k} $ takes values $0,~1,~ ..., ~2^k-1 $  with probabilities $\frac{1}{2^k}$;

if $k=2^s,~s \neq m$, then $\xi^j _{k} $ takes values $0,~1,~ ...,~ k \cdot 2^k-1$ with probabilities $\frac{1}{k \cdot 2^k}$;

 if $k=2^m$, then $\xi^j _{k} $ takes values $j \cdot  2^k+ 0,~j \cdot  2^k+1,~ ...,~ (j+1) \cdot 2^k-1$ with probabilities $\frac{1}{ 2^k}$.

Let $S_j$ be the spectrum of the measure $\mu^j$. From the construction of these measures and the definition of the set $T$ it follows that $S_j \bigcap S_i = \emptyset$ and $T = \bigcup_{i=1}^{2^m} S_i.$ Taking into account inequality $ \frac{\ln \mu^j(\Delta_n (x))}{\ln \lambda (\Delta_n(x))} \geq \frac{1}{2}, \forall x \in S_j$, and applying the mass distribution principle simultaneously for all measures $m^j$, we get $H^{\frac 12}(T, \mathcal{A})\geq 2^m$.
Since $m \in \mathbb{N}$ can be chosen arbitrarily, we have a desired conclusion about infiniteness of   $H^{\frac 12}(T, \mathcal{A})$.

On the other hand the set $T$ can be covered by $2^1 \cdot 2^2 \cdot \ldots \cdot 2^{2^s-1} \cdot 2^1 \cdot  2^2 \ldots 2^{s-1} \cdot 1 = 2^{\frac{(2^s-1)2^s}{2}+\frac{(s-1)s}{2}}$ intervals,  each of them is a union of $2^s2^{2^s}$  cylinders from $\mathcal{A}_{2^s}$ with length $\left(\frac{1}{4}\right)^{\frac{2^s(2^s-1)}{2}} \cdot \frac{2^s 2^{2^s}}{4^{2^s}} =
\left(\frac{1}{2}\right)^{2^{2s}-s} $. The $\frac{1}{2}$-volume of this covering is equal to
$$2^{\frac{(2^s-1)2^s}{2}+\frac{(s-1)s}{2}}
  \left(2^{-2^{2s}+s}\right)^{\frac{1}{2}}
  =2^{-\frac{1}{2}(2^{s}-s^2)} \rightarrow 0, ~~~ (s \rightarrow \infty).$$
 Therefore, $H^{\frac{1}{2}}(T)=0$.
\end{proof}

By using the same techniques it is not hard to prove the following result.

\begin{proposition}
Let $n_k=4^k$  and let $\mathcal{A}$ be the corresponding faithful net-covering family generated by the  Cantor series expansion. Then for any $\alpha \in (0,1)$ there exists a set $T_{\alpha}$ such that

$1) \dim_{H} T_{\alpha}=\alpha;$

$2) H^{\alpha}(T_{\alpha},\mathcal{A}) = + \infty;$

$3) H^{\alpha}(T_{\alpha})=0.$
\end{proposition}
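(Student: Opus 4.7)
The construction generalizes Example 2, replacing $\sqrt{n_k}$ by $n_k^\alpha$. For $\alpha\in(0,1)$ set $m_k:=\lfloor n_k^\alpha\rfloor=\lfloor 4^{\alpha k}\rfloor$ and let $T_\alpha$ consist of those $x=\sum_{k=1}^\infty\alpha_k(x)/\prod_{i=1}^k n_i$ with $\alpha_k(x)\in\{0,\ldots,m_k-1\}$ whenever $k\neq 2^s$ and $\alpha_k(x)\in\{0,\ldots,km_k-1\}$ whenever $k=2^s$ for some $s\in\mathbb{N}$. The ``enlarged'' range $\{0,\ldots,km_k-1\}$ at special positions is an initial segment, so the union of the corresponding rank-$k$ cylinders forms a single interval---this is exactly the mechanism that will force $H^\alpha(T_\alpha)=0$ while $H^\alpha(T_\alpha,\mathcal A)=+\infty$.

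For statement (1), endow $T_\alpha$ with the uniform Bernoulli-type measure $\mu$ (independent digits, equidistributed on the allowed range at each level). A direct expansion gives
\begin{equation*}
\log\mu(\Delta_n(x))=-\alpha\log(n_1n_2\cdots n_n)+O((\log n)^2),\quad\forall x\in T_\alpha,
\end{equation*}
the $O((\log n)^2)$ correction coming from the $O(\log n)$ special levels $2^s\le n$ and from the $O(1)$-total roundings in $m_k=\lfloor 4^{\alpha k}\rfloor$. Since $\log\lambda(\Delta_n(x))\sim-n^2\log 2$, this yields $\lim_{n\to\infty}\log\mu(\Delta_n(x))/\log\lambda(\Delta_n(x))=\alpha$ for every $x\in T_\alpha$. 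The Main Theorem guarantees faithfulness of $\mathcal A$ (because $\ln n_k/\ln(n_1\cdots n_{k-1})\to 0$ when $n_k=4^k$), so combining with Billingsley's lemma (Theorem 2.5 of \cite{B}) yields $\dim_H T_\alpha=\dim_H(T_\alpha,\mathcal A)=\alpha$.

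For (2), for each fixed $m\in\mathbb{N}$ split the range at level $k=2^m$ into $2^m$ consecutive blocks of $m_{2^m}$ values, obtaining a disjoint decomposition $T_\alpha=\bigsqcup_{j=0}^{2^m-1}S_j$. On each $S_j$ place the uniform measure $\mu^j$ (which uses only $m_{2^m}$ values at position $2^m$). The same asymptotic expansion gives $\log\mu^j(\Delta_n(x))/\log\lambda(\Delta_n(x))\to\alpha$ on $S_j$, hence $\mu^j(\Delta_n(x))\le|\Delta_n(x)|^\alpha$ for $n$ large. For any $\varepsilon$-covering $\{\Delta'_i\}$ of $T_\alpha$ by cylinders of $\mathcal A$, the mass distribution principle applied separately to each $\mu^j$ and then summed over $j$ produces $\sum_i|\Delta'_i|^\alpha\ge\sum_j\mu^j(S_j)=2^m$, so $H^\alpha(T_\alpha,\mathcal A)\ge 2^m$; letting $m\to\infty$ gives $H^\alpha(T_\alpha,\mathcal A)=+\infty$.

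For (3), cover $T_\alpha$ at level $2^s$ by the $\prod_{j<2^s,\,j\ne 2^t}m_j\cdot\prod_{t<s}2^t m_{2^t}$ intervals obtained by fixing the first $2^s-1$ digits in their allowed ranges and taking the whole segment $\{0,\ldots,2^s m_{2^s}-1\}$ at position $2^s$; each such interval has length $2^s m_{2^s}/(n_1\cdots n_{2^s})$. A bookkeeping computation using $m_k\asymp 4^{\alpha k}$ collapses the exponent of $4$ in the resulting $\alpha$-volume to exactly $\alpha(\alpha-1)\cdot 2^s$, which is strictly negative for $\alpha\in(0,1)$; the surviving factors grow only like $2^{s(s-1)/2+\alpha s}$, so the $\alpha$-volume tends to $0$ doubly exponentially and $H^\alpha(T_\alpha)=0$. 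The main obstacle lies in (3): one must carefully track the roundings in $m_k=\lfloor 4^{\alpha k}\rfloor$ and the polynomial contributions from the special positions so as to isolate the decisive coefficient $\alpha(\alpha-1)<0$; the rest of the argument is a straightforward transcription of Example 2 with the exponent $1/2$ replaced by $\alpha$.
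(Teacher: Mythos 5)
Your proposal is correct and follows essentially the same route as the paper, which likewise obtains $T_\alpha$ by transcribing Example~2 with the exponent $\tfrac12$ replaced by $\alpha$ (uniform digit measures for part (1), the block decomposition at level $2^m$ for part (2), and the interval covering at levels $2^s$ for part (3)); your exponent computation $\alpha(\alpha-1)2^s<0$ checks out. The only difference is that the paper splits into cases, taking $V_k=\{0,\dots,[n_k^{p/q}]-1\}$ for rational $\alpha=p/q$ and a sequence of rational approximations $p_k/q_k\nearrow\alpha$ for irrational $\alpha$, whereas your direct choice $m_k=\lfloor n_k^{\alpha}\rfloor$ handles both cases at once and is, if anything, cleaner.
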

\begin{proof}
  If $\alpha = \frac{p}{q} \in (0,1)$ is a rational number, then the proof is completely similar to those in example 2, but in the definition of the set $V_k$ of digits which are admissible at the k-th step of construction of $T_{\alpha}$, we define $V_k$ to be $\{0,1,..., \left[(n_k)^{\frac{p}{q}}\right]-1\}$ instead of $ \{0,1,..., \left[(n_k)^{\frac{1}{2}}\right]-1\}$ for $k\neq 2^s$, and $V_k = \{0,1,..., k \cdot \left[(n_k)^{\frac{p}{q}}\right]-1\}$ for $k= 2^s$, where $\left[ \cdot \right]$ denotes the integer part of an argument.

  If $\alpha  $ is an irrational number from $(0,1)$, then  we choose an increasing sequence $\{\frac{p_k}{q_k}\}$ of rational numbers converging to $\alpha$ and apply the same technics with the following definition of the set  $V_k := \{0,1,..., \left[(n_k)^{\frac{p_k}{q_k}}\right]-1\}$.
\end{proof}

\begin{remark}
The sequence $n_k$ has been chosen to be $\{4^k\}$ only for the simplicity of calculations in the above examples. In the forthcoming paper we shall show how the latter statement can be generalized and give necessary and sufficient conditions for the Cantor net-coverings to be comparable.
\end{remark}

\begin{remark}
  The latter proposition shows extreme differences between comparable and faithful net-coverings and demonstrates that the class of faithful net-coverings is  essentially wider then the class of comparable ones. The relation between these two classes is similar to the relation between bi-Lipshitz transformations and transformations preserving the Hausdorff dimension (see, e.g., \cite{APT, APT2} for details). More deep connections between faithfulness of net-coverings and the theory of transformations preserving the Hausdorff dimension will also be discussed in the forthcoming paper.
\end{remark}

\textbf{Acknowledgment}

This work was partly supported by DFG 436 UKR
113/97 project,  DFG KO 1989/6-1 project, and by the Alexander von Humboldt Foundation.

\bigskip


\begin{thebibliography}{999}




\bibitem{AlbeverioTorbin2007}
S. Albeverio, G. Torbin,
\textit{ On fine fractal properties of generalized infinite Bernoulli convolutions},
Bull. Sci. Math., \textbf{132}(2008), No. 8, P. 711--727.

\bibitem{AlbKonNikTor}
S. Albeverio, Yu. Kondratiev, R. Nikiforov, G. Torbin,
\textit{On fractal phenomena connected with infinite linear IFS and related singular  probability measures},
submitted to  \emph{Math. Proc. Cambridge Phil. Soc.}

\bibitem{AKPT2011}
S. Albeverio, V. Koshmanenko, M. Pratsiovytyi, G. Torbin,
 \textit{On fine structure of singularly continuous
probability measures and random variables with independent $\widetilde{Q}$ - symbols},
Methods of Functional Analysis and Topology, \textbf{17}(2011), No. 2, P. 97--111.



\bibitem{APT}
S. Albeverio, M. Pratsiovytyi, G. Torbin,
\textit{Fractal probability distributions and transformations preserving the
Hausdorff-Besicovitch dimension},
Ergodic Theory and Dynamical Systems, \textbf{24}(2004), No. 1, P. 1--16.



\bibitem{APT2}
S. Albeverio, M. Pratsiovytyi, G. Torbin,
\textit{Transformations preserving the Hausddorff-Besicovitch dimension},
Central European Journal of Mathematics, \textbf{6}(2008), No. 1, P. 119--128.


\bibitem{AT2}
S. Albeverio, G. Torbin,
\textit{Fractal properties of singularly
continuous probability distributions with independent $Q^{*}$-digits},
Bull. Sci. Math., \textbf{129}(2005),  No. 4, P. 356--367.

\bibitem{FG}
C. Bandt, S. Graf, M. Z$\ddot{a}$hle - eds,
\textit{Fractal Geometry and stochastics},
Basel, Boston, Berlin, Birkh$\ddot{a}$user, (2000).

\bibitem{Baranski}
K. Baranski,
\textit{Hausdorff dimension of the limit sets of some planar
geometric constructions},
Advances in Mathematics, \textbf{210}(2007), P. 215--245.

\bibitem{BernardiBondioli}
M. Bernardi, C. Bondioli,
\textit{On some dimension problems for self-affine fractals},
Journal for  Analysis and its Applications, \textbf{18}(1999), No. 3, P. 733--751.


\bibitem{Besicovitch}
A. Besicovitch,
\textit{On existence of subsets of finite measure
of sets of infinite measure},
Indag. Math, \textbf{14}(1952), P. 339--344.


 \bibitem{B}
 P. Billingsley,
 \textit{Hausdorff dimension in probability theory II},
 Ill. J. Math., (1961), No. 5, P. 291--198.

\bibitem{Bil1}
P. Billingsley,
\textit{ Ergodic theory and information},
New York-London-Sydney: John Wiley \& Sons, Inc., (1965).

\bibitem{Cantor1869}
G. Cantor,
\textit{$\ddot{U}$ber die einfachen Zahlensysteme},
Zeitschrift f. Math. u. Physik.,
\textbf{14}(1869), P. 121--128.

\bibitem{Cooper}
M. Cooper,
\textit{ Dimension, measure and infinite Bernoulli convolutions},
Math. Proc. Cambr. Phil. Soc., \textbf{124}(1998), P. 135--149.

\bibitem{Cutler}
C. Cutler,
\textit{A note on equivalent interval covering systems for Hausdorff dimension on $R$},
Internat. J. Math. and Math. Sci., \textbf{2}(1988), No. 4, P. 643--650.



\bibitem{ER_1959}
P. Erd\"os, A. Renyi,
\textit{Some further statistical properties of the digits in Cantor's series},
Acta Math. Acad. Sci. Hungar., \textbf{10}(1959), P. 207--215.


\bibitem{Everett}
C. Everett,
\textit{Representations for real numbers},
Bull. Amer. math. Soc., \textbf{52}(1946), P. 861--869.

\bibitem{Falconer85}
K. Falconer,
\textit{The geometry of fractal sets},
Cambridge University Press, (2002).



\bibitem{Fal}
K. Falconer,
\textit{Fractal Geometry: Mathematical Foundations and Applications},
Chichester: Wiley, (1990).

\bibitem{Fal97}
K. Falconer,
\textit{Techniques in Fractal Geometry},
Chichester: Wiley, (1997).


\bibitem{G_1976}
J. Galambos,
 \textit{Representations of real numbers by infinite series},
Lecture Notes in Mathematics,Springer-Verlag, Berlin-New York, \textbf{502}(1976).


\bibitem {Hutchinson2}
J. Hutchinson,
\textit{Fractals and self similarity},
Indiana Univ. Math. J., \textbf{30}(1981), P. 713--747.

\bibitem{Mandelbrot83}
B. Mandelbrot,
\textit{The Fractal geometry of nature},
Freeman and Co, San-Francisco, (1983).

\bibitem{ManceDiss}
B. Mance,
\textit{Normal numbers with respect to the Cantor series expansion},
Thesis (Ph.D.), The Ohio State University (2010).



\bibitem{Marstrand1954}
J. Marstrand,
\textit{The dimension of Cartesian product sets},
Proc. Cambridge Phil. Soc., \textbf{50}(1954), P. 198--202.


\bibitem{Mattila}
P. Mattila,
\textit{Geometry of sets and measures in euclidean spaces},
Cambridge University Press, (2004).

\bibitem{Peres}
Y. Peres, W. Schlag, B. Solomyak,
\textit{Sixty years of Bernoulli convolutions, Fractal geometry and stochastics II (Greifswald/Koserow,
1998)},
Progr. Probab., Birkh\"{a}user, Basel, \textbf{46}(2000). P. 39--65.

\bibitem{PeresSol}
Y. Peres, B. Solomyak,
\textit{Absolute continuity of Bernoulli convolutions, a simple proof},
Math. Res. Lett., \textbf{3}(1996), No. 2, P. 231--239.

\bibitem{PerTor}
Y. Peres, G. Torbin,
\textit{Continued fractions and dimensional gaps},
in preparation.

\bibitem{Pesin97}
Y. Pesin,
\textit{Dimension theory in dynamical systems.
Contemporary views and applications},
Chicago Lectures in Mathematics. University of Chicago Press, Chicago, (1997).


\bibitem{P}
M. Pratsiovytyi,
\textit{Fractal approach to
investigations of singular distributions},
National Pedagogical Univ., (1998).


\bibitem{PT_NZ2003}
M. Pratsiovytyi, G. Torbin,
\textit{On analytic (symbolic) representation of one-dimensional continuous transformations preserving the Hausdorff-Besicovitch dimension},
Transactions of the National Pedagogical University of Ukraine.  Mathematics, \textbf{4}(2003), P. 207--215.

\bibitem{Renyi}
A. Renyi,
\textit{Representations for real numbers and their ergodic properties},
Acta Math. Sci. Hungar., \textbf{8}(1957), P. 477--493.

\bibitem{R}
C. Rogers,
\textit{Hausdorff measures},
Cambridge Univ. Press, London, (1970).


\bibitem{Schweiger}
F. Schweiger,
\textit{Ergodic Theory of Fibred Systems and Metric
Number Theory},
 Oxford: Clarendon Press, (1995).

\bibitem{Shir}
A. Shiryaev,
\textit{Probability},
Springer-Verlag, New York, (1996).

\bibitem{Solomyak95}
B. Solomyak,
\textit{On the random series $\sum \pm
\lambda ^n$ (an Erd\"{o}s problem)},
Ann. of Math., \textbf{142}(1995), No. 3, 611--625.


\bibitem{TuP}
A. Turbin, M. Pratsiovytyi,
\textit{Fractal sets, functions, and distributions},.
Kiev: Naukova Dumka, (1992).

\bibitem{Triebel}
H. Triebel,
\textit{Fractals and Spectra},
Basel, Boston, Berlin, Birkh\"{a}user, (1997).


\end{thebibliography}
\end{document}